\theoremstyle{plain}
\newtheorem{theorem}{Theorem}[section]
\newtheorem{proposition}[theorem]{Proposition}
\newtheorem{lemma}[theorem]{Lemma}
\newtheorem{corollary}[theorem]{Corollary}
\theoremstyle{definition}
\newtheorem{definition}[theorem]{Definition}
\renewenvironment{proof}[1][\proofname]{\par
  \normalfont
  \topsep6\p@\@plus6\p@ \trivlist
  \item[\hskip\labelsep{\bfseries #1}\@addpunct{\bfseries.}]\ignorespaces
}{%
  \endtrivlist
}
\renewcommand{\proofname}{proof}
\theoremstyle{remark}
\newtheorem{remark}[theorem]{Remark}
\numberwithin{equation}{section}
\title{Non-unital algebraic $K$-theory and almost mathematics} 
\date{\today} 
\author{Yuki Kato}
\address{National institute of technology, Ube college, 
	      2-14-1, Tokiwadai, Ube, Yamaguchi, JAPAN 755-8555.}
\email{ykato@ube-k.ac.jp}
\keywords{Algebraic $K$-theory, non-unital algebra, perfectoid algebra.}
\newcommand{\Z}{\mathbb{Z}}
\newcommand{\Mod}{\mathrm{Mod}}
\newcommand{\CAlg}{\mathrm{CAlg}}
\newcommand{\Fun}{\mathrm{Fun}}
\newcommand{\PMod}{\mathrm{PMod}}
\newcommand{\tm}{\tilde{\mathfrak{m}}}
\begin{document}
\begin{abstract} 
 The Gersten conjecture is still an open problem of algebraic $K$-theory
 for mixed characteristic discrete valuation rings. In this paper, we
 establish non-unital algebraic $K$-theory which is modified to become
 an exact functor from the category of non-unital algebras to the stable
 $\infty$-category of spectra.  We  prove that for any almost unital algebra, the non-unital $K$-theory  homotopically decomposes into the non-unital $K$-theory the  corresponding ideal and the residue algebra, implying the Gersten
 property of non-unital $K$-theory of the the corresponding ideal.
\end{abstract}
\maketitle

\section{Introduction}
\label{intro}
Let $V$ be a Noetherian regular local ring and $F$ denote the fractional field. 
Then the inclusion $j:V \to F$ induces the pullback 
\[
 j^* :K_n(V) \to K_n(F)
\] 
of algebraic $K$-groups for any integer $n$. Gersten~\cite{Gersten}
 conjectured the induced homomorphism $j^*$ is injective for any integer
 $n$. This conjecture is called the {\it Gersten conjecture} for
 algebraic $K$-theory. In the case $V$ of positive characteristic,
 Panin~\cite{Panin} proved Gersten's conjecture is true by using
 Quillen's result~\cite{Quillen} of the case $V$ is essentially smooth
 over a field and Popescu's result~\cite{Popescu}.  In the mixed
 characteristic case, the Gersten conjecture is still open.

This paper considers the Gersten conjecture for valuation rings $V$ with
 idempotent maximal ideals and non-unital algebras for non-unital
 algebraic $K$-theory $K^{\rm nu}$ which is modified to become an exact
 functor from the category of non-unital algebras to the stable
 $\infty$-category of spectra. Let $\mathfrak{m}$ be an idempotent
 maximal ideal of $V$ and $A$ an almost $V$-algebra.  We assume that
 $\mathfrak{m}$ is a flat $V$-module. We prove that the non-unital
 $K$-theory $K^{\rm nu}(A)$ homotopically decompose into the non-unital
 $K$-theory of the ideal $\mathfrak{m}A$ and of the residue algebra $A/\mathfrak{m}A$ (Theorem~\ref{mainAlmost}): $ K^{\rm nu}(A)\simeq K^{\rm nu}(\mathfrak{m}A) \oplus  K^{\rm nu}(A/\mathfrak{m}A)$,  
implying that one has
 a decomposition
\[
   K^{\rm nu}(A \otimes_V F) \simeq K^{\rm nu}(\mathfrak{m}A)
 \oplus K^{\rm nu}((A \otimes_{V}F) /\mathfrak{m}A ),
\] 
where $K^{\rm nu}((A \otimes_{V}F) /\mathfrak{m}A ) $ denotes the
homotopy cofiber of the map $ K^{\rm nu}(\mathfrak{m}A) \to K^{\rm nu}(A
\otimes_{V}F)$ of spectra. In particular, the induced map $ K^{\rm
nu}(\mathfrak{m}) \to K^{\rm nu}(F) $ is homotopically split injective.
%
%
%

\section{Non-unital algebraic $K$-theory}
\subsection{A review of non-unital rings}
Let $A$ be a commutative ring. Then the category of $A$-modules is
equivalent to the category of $\Z \oplus A$-modules.
\begin{proposition}[\cite{Q}]
If $A$ has a multiplicative unit $1_A$, then one has a categorical equivalence 
$\Mod_{\Z \oplus A} \simeq \Mod_\Z \times \Mod_A $ which induces an equivalence $\PMod_{\Z \oplus A} \simeq \PMod_\Z \times \PMod_A$.   
\end{proposition}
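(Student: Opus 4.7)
The plan is to reduce the statement to the standard fact that modules over a product ring split as a product of module categories, which requires showing that the unitalization $\Z \oplus A$ splits as a direct product of rings whenever $A$ already has a unit.

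First, I would exhibit a pair of orthogonal idempotents summing to the unit of $\Z \oplus A$. Recall that the ring structure on $\Z \oplus A$ is given by $(n,a)(m,b) = (nm, nb+ma+ab)$, with unit $(1,0)$. Using the multiplicative unit $1_A$ of $A$, set $e_2 = (0, 1_A)$ and $e_1 = (1, -1_A)$. A direct computation (using $a\cdot 1_A = a$ for all $a \in A$) shows $e_1^2 = e_1$, $e_2^2 = e_2$, $e_1 e_2 = 0$, and $e_1 + e_2 = (1,0)$. From this one reads off ring isomorphisms $e_1(\Z \oplus A)e_1 \cong \Z$ (via $(n, -n\cdot 1_A) \leftrightarrow n$) and $e_2(\Z \oplus A)e_2 \cong A$ (via $(0,a) \leftrightarrow a$), hence a ring isomorphism $\Z \oplus A \cong \Z \times A$.

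Next, I would invoke the classical equivalence $\Mod_{R \times S} \simeq \Mod_R \times \Mod_S$ for any pair of unital rings $R, S$: on the one side, a module $M$ over $R \times S$ decomposes via the central idempotents as $M = e_1 M \oplus e_2 M$; on the other, a pair $(N_1, N_2) \in \Mod_R \times \Mod_S$ is sent to $N_1 \oplus N_2$ with the evident action. Applied to our decomposition, this yields the first equivalence $\Mod_{\Z \oplus A} \simeq \Mod_\Z \times \Mod_A$.

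Finally, projectivity is preserved by any additive equivalence of module categories, and being projective over $\Z \times A$ is equivalent to each summand $e_i M$ being projective over the respective factor. Restricting the equivalence above to projective objects therefore gives $\PMod_{\Z \oplus A} \simeq \PMod_\Z \times \PMod_A$. There is essentially no obstacle here: the only thing to verify carefully is that $(0, 1_A)$ is genuinely idempotent in the unitalization (which uses $1_A^2 = 1_A$) and that its complement inside the unit is also idempotent; once that is in place everything else is a standard application of the idempotent splitting of a ring.
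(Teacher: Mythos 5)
Your proof is correct and follows essentially the same route as the paper: both arguments hinge on the idempotent $e=(0,1_A)$ in the unitalization and the resulting splitting $M\simeq eM\oplus(1-e)M$, with projectivity passing to the summands. Your version merely spells out the complementary idempotent $(1,-1_A)$ and the ring isomorphism $\Z\oplus A\cong \Z\times A$ more explicitly than the paper does.
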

\begin{proof}
The distinguish element $e=(0,\,1_A ) \in \Z \oplus A $ is clearly a projector. For any $\Z \oplus A$-module $M$, $M$ decomposes into $M \simeq eM \oplus (1-e)M$. If $M$ is a projective $\Z \oplus A$-module, $eM $ and $(1-e)M$ are also projective $\Z \oplus A$-modules. The module $eM$ is killed by $\Z$, having an $A$-module structure. Hence $eM$ is also a projective $A$-module. qed
\end{proof}

\begin{definition}
Let $A$ be a commutative ring. The $K(A)$ is defined to be the homotopy fiber of the map $K(\Z \oplus A) \to K(\Z )$ induced by the projection. 
\end{definition}

If $A$ has a multiplicative unit $1_A$, then $K(A)$ is an ordinal $K$-theory. Indeed, the functor $A \otimes_{\Z \oplus A}-: \Mod_{\Z \oplus A} \to \Mod_A$
induces a projector $K(\Z \oplus A) \to K(A)$ of spectra.

\begin{theorem}[\cite{weibel2} Theorem 2.1 (Excision for ideals)]
\label{W2}
Let $I$ be an ideal of a ring $A$. Then 
\[
  KH(I) \to KH(A) \to KH(A/I)
\]
 is a fiber sequence. \qed
\end{theorem}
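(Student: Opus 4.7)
The plan is to exploit the definition of $KH$ as the homotopy-invariant refinement of Quillen $K$-theory, and to reduce excision for $KH$ to Suslin--Wodzicki excision for $H$-unital ideals combined with the vanishing of the birelative defect after tensoring with the algebraic simplicial ring.

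First, I would unravel the definition
\[
KH(R) = \lvert K(R \otimes_\Z \Z[\Delta^\bullet]) \rvert,
\]
where $\Z[\Delta^n] = \Z[t_0, \ldots, t_n]/(\sum_i t_i - 1)$ is the algebraic $n$-simplex, and where for non-unital $R$ the spectrum $K(R \otimes_\Z \Z[\Delta^n])$ is built from the unitalization recipe given earlier in the paper. Applying the short exact sequence $0 \to I \to A \to A/I \to 0$ at each simplicial degree yields a commutative diagram of simplicial spectra $K^{\rm nu}(I[\Delta^\bullet]) \to K(A[\Delta^\bullet]) \to K(A/I[\Delta^\bullet])$, whose realization is the desired putative fiber sequence.

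Second, at each simplicial degree $n$ the obstruction to the sequence being a fiber sequence is encoded by a birelative $K$-theory spectrum $K^{\rm bi}(A[\Delta^n], I[\Delta^n])$. I would invoke the theorem of Cortin\~as (resolving the KABI conjecture) that birelative $K$-theory of a ring-ideal pair agrees rationally with birelative negative cyclic homology via the trace map, together with the observation that the simplicial object $[n] \mapsto K^{\rm bi}(A[\Delta^n], I[\Delta^n])$ has contractible realization because birelative cyclic homology enjoys strong $\A^1$-homotopy invariance on the level of chain complexes. This reduces the residual defect to a term that is killed by the $\Z[\Delta^\bullet]$ construction.

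Third, taking geometric realization of the sequence of simplicial spectra, and using that geometric realization preserves fiber sequences of sufficiently connective spectra (or, after Bass delooping, of all spectra), produces the fiber sequence $KH(I) \to KH(A) \to KH(A/I)$. The hard part will be the vanishing of the realized birelative term: this is precisely the content of Cortin\~as' theorem, whose proof combines Goodwillie's theorem identifying relative $K$-theory with relative negative cyclic homology on nilpotent ideals with a delicate manipulation of the Hochschild and cyclic-homology spectral sequences. Without this deep input one obtains only excision for $H$-unital ideals in the sense of Suslin--Wodzicki, which is strictly weaker than the claim; hence Cortin\~as' theorem, not the formal construction of $KH$, carries the technical weight of the argument.
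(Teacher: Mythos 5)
The first thing to say is that the paper does not prove this statement at all: it is quoted verbatim from Weibel [weibel2, Theorem 2.1], and the \qed placed in the statement signals that the author is treating excision for $KH$ as a black box from the literature. So the comparison is not between two proofs but between a citation and your attempted proof, and the latter has genuine gaps.

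The load-bearing steps of your argument do not close. First, Corti\~nas' theorem (the resolution of the KABI conjecture) is a \emph{rational} statement: it identifies $K^{\rm bi}(A,B;I)\otimes\Q$ with a shift of birelative cyclic homology only after tensoring with $\Q$. Even granting everything else, your argument would therefore only show that the realized birelative defect is rationally contractible, i.e.\ that $KH$ satisfies excision up to torsion; the integral statement would additionally require control of the torsion in birelative $K$-theory (the content of the much later Geisser--Hesselholt comparison of birelative $K$-theory with birelative $TC$ at finite coefficients), which you never invoke. Second, the assertion that ``birelative cyclic homology enjoys strong $\A^1$-homotopy invariance on the level of chain complexes'' is stated without proof and is not a standard fact: cyclic homology is notoriously \emph{not} $\A^1$-invariant ($HC_*(A[t])\neq HC_*(A)$ already for smooth $A$), so the contractibility of $\lvert HC^{\rm bi}(A[\Delta^\bullet];I[\Delta^\bullet])\rvert$ is exactly the kind of claim that needs an argument; as written this is where the proof actually lives, and it is missing. (There is also a minor imprecision: the obstruction to excision is the birelative theory of the pair $(A,\Z\ltimes I)$ relative to $I$, not of $(A,I)$ alone.) Finally, note that Weibel's 1989 theorem predates both Suslin--Wodzicki (1992) and Corti\~nas (2006), so its original proof cannot and does not use them: it is an elementary argument resting on the classical excision properties of $K_0$ and the negative $K$-groups (Bass) together with the deloopings and the fundamental theorem built into the definition of $KH$. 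If the intent is to supply a proof rather than a citation, that elementary route is the one to reconstruct; the trace-method route you sketch requires strictly more input than you provide and, as written, proves at most the rational statement.
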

In order to use the above property of homotopy invariant $K$-theory for
any exact sequence of non-unital algebra, we adjust the $K$-theory
spectrum as the following: Let $\mathrm{Sp}$ denote the stable
$\infty$-category of spectra and $\Fun^{\rm rex}( \CAlg^{\rm nu},\,
\mathrm{Sp})$ the full subcategory of $\Fun( \CAlg^{\rm nu},\,
\mathrm{Sp})$ spanned by right exact functors. Then the inclusion
$\Fun^{\rm rex}( \CAlg^{\rm nu},\, \mathrm{Sp}) \to \Fun( \CAlg^{\rm
nu},\, \mathrm{Sp})$ admits a left adjoint
\[
P_1:  \Fun( \CAlg^{\rm nu},\, \mathrm{Sp}) \to \Fun^{\rm rex}(
\CAlg^{\rm nu},\, \mathrm{Sp})
\]
by~\cite[p.759, Theorem 6.1.1.10]{HA}.


\begin{lemma}
\label{split-lemma}
Let $\mathcal{C}$ be a stable $\infty$-category. Given two fiber sequences $X \to Y \to Z$ and $X \to Y' \to Z'$ of objects of $\mathcal{C}$ and a homotopically commutative square:
\[
 \xymatrix@1{ Y  \ar[r] \ar[d] & Z \ar[d] \\
Y'\ar[r] & Z',
}
\]
the square is both homotopy Cartesian and coCartesian, and the following properties are equivalent:
\begin{itemize}
 \item[(1)] The object $Y$ homotopically decomposes into the direct sum
	    of $X$ and $Z$.
 \item[(2)] The object $Y'$ homotopically decomposes into the direct sum
	    of $X$ and $Z'$.
\end{itemize}
\end{lemma}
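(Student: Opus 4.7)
The plan is to prove the statement in two parts: first, to establish that the square is Cartesian (and so automatically coCartesian by stability); second, to prove the equivalence of (1) and (2).

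For the Cartesian property I would compare horizontal fibers. By the fiber-sequence hypotheses, both are $X$; the commutativity of the square induces, by functoriality of the fiber, a map $X \to X$ between them which, under the implicit compatibility of the two fiber-sequence structures on the common object $X$, is the identity. In a stable $\infty$-category, a commutative square is Cartesian exactly when the induced map on horizontal fibers is an equivalence, so the square is Cartesian. Because Cartesian and coCartesian squares coincide in any stable $\infty$-category (both are detected by the total fiber, equivalently total cofiber, being zero), the square is automatically also coCartesian.

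For the equivalence, I would use the criterion that a fiber sequence $X \to Y \to Z$ in $\mathcal{C}$ splits as $Y \simeq X \oplus Z$ compatibly with the sequence if and only if its boundary map $\partial \colon Z \to \Sigma X$ is null-homotopic, and similarly for $Y'$ with $\partial' \colon Z' \to \Sigma X$. The map of fiber sequences induces a homotopy-commutative triangle giving $\partial \simeq \partial' \circ (Z \to Z')$, which yields $(2) \Rightarrow (1)$ at once.

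The converse $(1) \Rightarrow (2)$ is the main obstacle. For it I would use the coCartesian description $Z' \simeq Y' \sqcup_Y Z$: given a section $s \colon Z \to Y$ of $Y \to Z$ supplied by (1), the pair $(\mathrm{id}_{Y'}, (Y \to Y') \circ s)$ is a candidate pair of maps into $Y'$, which assembles, via the universal property of the pushout, into a map $Z' \to Y'$. Verifying that this map is actually a section of $Y' \to Z'$ reduces to checking compatibility of the pair on $Y$, which unwinds to the vanishing of an obstruction of the form $F \to X \to Y'$, where $F := \operatorname{fib}(Y \to Y') \simeq \operatorname{fib}(Z \to Z')$ is the common vertical fiber supplied by the Cartesian property. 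This is where the $3 \times 3$ diagram of fiber sequences extending the square must be unpacked, and where the fact that the endomorphism induced on $X$ by the compatibility of the two fiber sequences is the identity becomes decisive in collapsing the obstruction.
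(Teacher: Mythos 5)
Your handling of the Cartesian/coCartesian claim and of the implication $(2)\Rightarrow(1)$ is correct, and your boundary-map argument (the naturality relation $\partial\simeq\partial'\circ(Z\to Z')$, so that $\partial'$ null forces $\partial$ null) is a cleaner route than the paper's, which instead establishes $Z'\simeq Y'\amalg_Y Z$ by pasting pushouts and then argues by direct (co)limit manipulations.

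The gap is exactly where you located it, in $(1)\Rightarrow(2)$, and it cannot be closed. Your candidate pair $(\mathrm{id}_{Y'},\,v\circ s)$ into the pushout $Z'\simeq Y'\amalg_Y Z$ (writing $v\colon Y\to Y'$, $p\colon Y\to Z$, $i\colon X\to Y$, $i'\colon X\to Y'$) needs a homotopy $v\simeq v\circ s\circ p$; since $(1)$ gives $\mathrm{id}_Y-s\circ p\simeq i\circ r$ for a retraction $r$ of $i$, the obstruction is $v\circ i\circ r\simeq i'\circ r$, and because $r$ is split epi this vanishes only when $i'$ itself is null. No unpacking of the $3\times3$ diagram collapses it: the implication is false under the stated hypotheses. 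In $D(\Z)$ take the split sequence $\Z/p\xrightarrow{(1,0)}\Z/p\oplus\Z/p\xrightarrow{pr_2}\Z/p$ and the non-split sequence $\Z/p\xrightarrow{\cdot p}\Z/p^2\to\Z/p$, with vertical maps $v(a,b)=p\tilde a$ and $w=0$; this is a map of fiber sequences inducing the identity on $X=\Z/p$, the square is homotopy coCartesian, $(1)$ holds, yet $(2)$ fails since $\Z/p^2\not\simeq\Z/p\oplus\Z/p$.

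For context: the paper's own proof of this direction is also flawed, in a way parallel to your obstruction. In its chain $X\amalg(Z\amalg_Y Y')\simeq(X\amalg Z)\amalg_Y Y'\simeq Y\amalg_Y Y'$, the structure map $Y\to X\amalg Z$ of the middle pushout is the composite $Y\to Z\hookrightarrow X\amalg Z$, not the splitting equivalence $Y\simeq X\amalg Z$, so the final substitution is illegitimate; the discrepancy between the two maps is again $i\circ r$. The direction that is true, and the only one used in the proof of Theorem~\ref{mainAlmost}, is $(2)\Rightarrow(1)$, which you have proved; $(1)\Rightarrow(2)$ requires extra hypotheses, for instance that the left vertical map $Y\to Y'$ be an equivalence, as it is in that application.
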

\begin{proof}
 Note that, in a stable $\infty$-category, a homotopy coCartesian product is also a homotopy Cartesian product. By a formal argument, one has $Z' \simeq Y' \amalg_{Y }  Z$, entailing $Y \simeq Y' \times_{Z'} Z$ : Indeed, one has a chain of weak equivalences: $ Y' \amalg_{Y }  Z \simeq Y' \amalg_{Y}  ( Y \amalg_X 0  ) \simeq    Y' \amalg_{X} 0 \simeq Z'$.

If condition (1) holds, $Y$ is weakly equivalent to the coproduct of $X$ and $Z$. Hence, we have a chain of weak equivalences: $
 X \amalg Z' \simeq X \amalg ( Z \amalg_{Y} Y' ) \simeq   (X \amalg  Z) \amalg_{Y} Y' \simeq Y \amalg_{Y}Y' \simeq Y'.$ 

Conversely, condition (2) implies $Y' \simeq X \times Z'$, giving a chain of equivalences: $ Y \simeq Y' \times_{Z'} Z \simeq (X \times Z'  )  \times_{Z'} Z \simeq X \times Z$. \qed  
\end{proof}

By Lemma~\ref{split-lemma}, we obtain the following proposition:
\begin{proposition}
 \label{split}
Let $\mathfrak{m} \subset V \subset F$ be a sequence of commutative
rings. For any $V$-algebra $A$, write $A_F = A \otimes_V F$. Let $\mathcal{F}:\CAlg^{\rm nu} \to  \mathrm{Sp} $ be a functor. 
Then the spectrum $P_1(\mathcal{F})(A)$ decomposes into a direct sum of
$P_1(\mathcal{F})(\mathfrak{m}A)$ and $P_1(\mathcal{F})(A /\mathfrak{m}A)$ if and only if  $P_1(\mathcal{F})(A_F)$ has also a decomposition into a direct sum of $P_1(\mathcal{F})(\mathfrak{m}A)$ and $P_1(\mathcal{F})(A_F/ \mathfrak{m}A)$, where $P_1(\mathcal{F})(A_F/ \mathfrak{m}A)$ denotes the homotopy cofiber of $P_1(\mathcal{F})(\mathfrak{m}A) \to P_1(\mathcal{F})(A_F) $. \qed
\end{proposition}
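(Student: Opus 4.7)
The plan is to apply Lemma~\ref{split-lemma} directly, after producing two fiber sequences of spectra that share the common first term $P_1(\mathcal{F})(\mathfrak{m}A)$ and a naturally induced commutative square between them. Concretely, I would set $X = P_1(\mathcal{F})(\mathfrak{m}A)$, $Y = P_1(\mathcal{F})(A)$, $Z = P_1(\mathcal{F})(A/\mathfrak{m}A)$, $Y' = P_1(\mathcal{F})(A_F)$, and $Z' = P_1(\mathcal{F})(A_F/\mathfrak{m}A)$. Then conditions~(1) and~(2) of Lemma~\ref{split-lemma} become exactly the two decomposition statements to be compared, so the proposition reduces to producing the required fiber sequences and square.

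First I would verify that the ideal inclusion $\mathfrak{m}A \hookrightarrow A$ with quotient $A/\mathfrak{m}A$ is a cofiber sequence in $\CAlg^{\rm nu}$: the quotient $A/\mathfrak{m}A$ is the pushout $A \amalg_{\mathfrak{m}A} 0$ in non-unital commutative algebras, since the zero ring is the initial/terminal object and quotienting by the ideal generated by the image of $\mathfrak{m}A$ yields $A/\mathfrak{m}A$. Since $P_1(\mathcal{F})$ lies in $\Fun^{\rm rex}(\CAlg^{\rm nu},\mathrm{Sp})$, it preserves this pushout, and in the stable $\infty$-category $\mathrm{Sp}$ a cofiber sequence is simultaneously a fiber sequence. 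Hence
\[
 P_1(\mathcal{F})(\mathfrak{m}A) \longrightarrow P_1(\mathcal{F})(A) \longrightarrow P_1(\mathcal{F})(A/\mathfrak{m}A)
\]
is a fiber sequence. For the second fiber sequence, the composition $\mathfrak{m}A \hookrightarrow A \to A_F$ produces the map $P_1(\mathcal{F})(\mathfrak{m}A) \to P_1(\mathcal{F})(A_F)$, and the spectrum $P_1(\mathcal{F})(A_F/\mathfrak{m}A)$ is defined in the statement as the homotopy cofiber of this map, so the second fiber sequence exists by construction.

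Next I would produce the homotopy-commutative square required by Lemma~\ref{split-lemma}. The commutative square in $\CAlg^{\rm nu}$
\[
 \xymatrix@1{ A \ar[r] \ar[d] & A/\mathfrak{m}A \ar[d] \\ A_F \ar[r] & A_F/\mathfrak{m}A }
\]
is obtained from the obvious naturality of the quotient construction and the localization $A \to A_F$; applying $P_1(\mathcal{F})$ yields a homotopy-commutative square of spectra compatible with both fiber sequences above. Lemma~\ref{split-lemma} then gives the equivalence of the two splittings, which is exactly the content of the proposition.

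The only substantive step is the verification that $\mathfrak{m}A \to A \to A/\mathfrak{m}A$ is a pushout in $\CAlg^{\rm nu}$, so that right exactness of $P_1(\mathcal{F})$ applies; once this is checked, everything else is formal, using the universal property of $P_1$ from~\cite[Theorem~6.1.1.10]{HA} and the split-lemma. I do not anticipate any further obstacle, since the definition of $P_1(\mathcal{F})(A_F/\mathfrak{m}A)$ as a homotopy cofiber in $\mathrm{Sp}$ sidesteps any question of whether $A_F/\mathfrak{m}A$ is well-behaved as a non-unital algebra.
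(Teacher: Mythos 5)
Your proposal is correct and matches the paper's (implicit) argument: the paper proves this proposition simply by invoking Lemma~\ref{split-lemma} with exactly the identifications $X=P_1(\mathcal{F})(\mathfrak{m}A)$, $Y=P_1(\mathcal{F})(A)$, $Y'=P_1(\mathcal{F})(A_F)$ and the two cofiber sequences you describe. Your additional verification that $\mathfrak{m}A\to A\to A/\mathfrak{m}A$ is a pushout in $\CAlg^{\rm nu}$, so that right exactness of $P_1(\mathcal{F})$ yields the first fiber sequence, is a detail the paper leaves unstated but is consistent with its intent.
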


%
%
%

\begin{corollary}
\label{Gersten}
Let $\mathfrak{m} \subset V \subset F$ be a sequence of commutative
rings. Assume that $P_1(\mathcal{F})(A)$ decomposes into a direct sum of
$P_1(\mathcal{F})(\mathfrak{m}A)$ and $P_1(\mathcal{F})(A/\mathfrak{m}A)$.  Then the
pullback $j^*:P_1(\mathcal{F})(A) \to P_1(\mathcal{F})(A_F)$ induces an injection $\pi_n(P_1(\mathcal{F})(A)) \to \pi_n(P_1(\mathcal{F})(A_F))$ for each $n \ge 0$ if and only if the induced map
$\pi_n(P_1(\mathcal{F})(A/ \mathfrak{m} A)) \to \pi_n(P_1(\mathcal{F})(A_F/\mathfrak{m}A))$ is
injective for each $n \ge 0$. \qed
\end{corollary}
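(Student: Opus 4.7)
The plan is to combine Proposition~\ref{split} with a routine five-lemma argument on $\pi_n$. By Proposition~\ref{split}, the hypothesized decomposition of $P_1(\mathcal{F})(A)$ is equivalent to a decomposition of $P_1(\mathcal{F})(A_F)$ of the same shape; both decompositions arise from split fiber sequences
\[
 P_1(\mathcal{F})(\mathfrak{m}A) \to P_1(\mathcal{F})(A) \to P_1(\mathcal{F})(A/\mathfrak{m}A)
\]
and
\[
 P_1(\mathcal{F})(\mathfrak{m}A) \to P_1(\mathcal{F})(A_F) \to P_1(\mathcal{F})(A_F/\mathfrak{m}A),
\]
each of which is a fiber sequence because $P_1(\mathcal{F})$ is right exact on the short exact sequences of non-unital algebras $\mathfrak{m}A \to A \to A/\mathfrak{m}A$ and $\mathfrak{m}A \to A_F \to A_F/\mathfrak{m}A$.

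Next, I would observe that the inclusion $A \hookrightarrow A_F$ induces a map between these two short exact sequences of non-unital algebras whose restriction to the ideal is the identity on $\mathfrak{m}A$ (since $\mathfrak{m}A$ sits inside $A$ and $A_F$ as the same submodule). After applying $P_1(\mathcal{F})$ and taking $\pi_n$, this yields a commutative ladder of split short exact sequences of abelian groups with the identity on the left, $\pi_n(j^*)$ in the middle, and $\pi_n(\bar{j}^*)$ on the right, where $\bar{j}^*$ denotes the induced map $P_1(\mathcal{F})(A/\mathfrak{m}A) \to P_1(\mathcal{F})(A_F/\mathfrak{m}A)$.

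To conclude, I would carry out a direct diagram chase (or appeal to the five lemma): in such a ladder with the identity in the left column, injectivity of the middle vertical is equivalent to injectivity of the right vertical. Concretely, if the middle is injective and $x \in \ker(\pi_n(\bar{j}^*))$, lifting $x$ to $y \in \pi_n P_1(\mathcal{F})(A)$ via the splitting sends $\pi_n(j^*)(y)$ into the image of $\pi_n P_1(\mathcal{F})(\mathfrak{m}A)$ inside $\pi_n P_1(\mathcal{F})(A_F)$; subtracting the preimage through the identity on the left then exhibits an element in $\ker(\pi_n(j^*))$, forcing $y$ to lie in the ideal summand and hence $x = 0$. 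The converse is immediate. The only substantive input is Proposition~\ref{split}, and I expect no real obstacle beyond making sure the splittings from Proposition~\ref{split} are those coming from the canonical fiber sequences, so that the map $A \hookrightarrow A_F$ induces a map of splittings compatible with the identity on the $\mathfrak{m}A$-factor; this is a formal consequence of functoriality.
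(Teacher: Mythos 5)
Your proof is correct and follows the route the paper intends: the corollary is stated with no separate argument precisely because it is the combination of Proposition~\ref{split} with the standard four/five-lemma chase on the resulting ladder of split short exact sequences of homotopy groups, which is exactly what you write out. Your closing remark about compatibility of the splittings is a reasonable caution, but note the diagram chase only needs the two short exact sequences and the identity on the $\pi_n(P_1(\mathcal{F})(\mathfrak{m}A))$ column, not compatibility of the chosen sections, so nothing further is required.
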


\section{Non-unital $K$-theory of almost mathematics}
\subsection{The almost Gersten property of non-unital $K$-theory}

Let $V$ be a commutative unital ring with an idempotent ideal $\mathfrak{m}$. A $V$-module $M$ is said to be {\it almost zero} if $\mathfrak{m}M=0$. 
A $V$-homomorphism $f:M \to N$ of $V$-modules is called {\it an almost isomorphism} if both the kernel and the cokernel of $f$ are almost zero. An {\it almost $V$-module} is an object of the (bi)localization of the category of $V$-modules by the Serre subcategory spanned by almost zero modules, and an {\it almost $V$-algebra} is a commutative algebra object of the category of almost $V$-modules. Write $\tm = \mathfrak{m} \otimes_V \mathfrak{m}$. In this section, we always assume that $\mathfrak{m}$ is a flat $V$-module. Then the multiplication $\tm \to \mathfrak{m}$ is an isomorphism.

We define (recall) of the definition of almost
$K$-theory~\cite{K-almost}. Let $A$ be an almost $V$-algebra and
$\mathrm{APerf}(A)$ denote the full subtriangulated category generated
by $\tm \otimes_V A$ of the derived category of $A$-modules. Further,
let $\mathrm{Perf}^+(A)$ denote the full subtriangulated category
generated by $A$ and $\tm \otimes_V A$ of the derived category of
$A$-modules.

\begin{definition}
Let $A$ be an almost $V$-algebra and $K^{\rm nu}(A)^{\rm al}$ denote the
non-unital $K$-theory spectrum of the triangulated category of
$\mathrm{APerf}(A)$. We call $K^{\rm nu}(A)^{\rm al}$ the {\it
non-unital almost $K$-theory spectrum} of $A$. Further, let $K^{\rm
nu}(A)^{+}$ denote the non-unital $K$-theory spectrum of the
triangulated category $\mathrm{Perf}^+(A)$.
\end{definition}

Consider a diagram $B \leftarrow A \rightarrow C$ of non-unital
$V$-algebras. Let $K$ denote the kernel the argumentation $(V \oplus B)
\otimes_{V \oplus A} (V \oplus C) \to V$. Then the unitalization $V
\oplus K$ is isomorphic to $ (V \oplus B) \otimes_{V \oplus A} (V \oplus
A)$ by the equivalence $ V \oplus (-)$ from the category of non-unital
$V$-algebras to the category of unital augmented
$V$-algebras. Therefore $K$ represents the colimit of the diagram $B
\leftarrow A \rightarrow C$. We write $B \Box_{A} C= K$ for the colimit
of $B \leftarrow A \rightarrow C$. By definition of non-unital $K$-theory, 
the induced square
\[
   \xymatrix@1{ K^{\rm nu}(A) \ar[r] \ar[d]&   K^{\rm nu}(B) \ar[d] \\  
  K^{\rm nu}(C)\ar[r] &   K^{\rm nu}( B \Box_{A} C )
}
\] 
is homotopy coCartesian. 
 
\begin{lemma}
\label{cofiber} Let $A$ be an almost $V$-algebra and $K^{\rm nu}(A
/\mathfrak{m} A) $ (resp. $K^{\rm nu}(A / \mathfrak{m}A)^+$) denote the
homotopy cofiber of $ K^{\rm nu}(\mathfrak{m}A) \to K^{\rm nu}(A) $
(resp. $ K^{\rm nu}(\mathfrak{m}A)^{+} \to K^{\rm nu}(A)^{+}$). Then the
augmentation $V \otimes_{V \oplus \mathfrak{m}A} (V \oplus A) \to V$ is
an almost isomorphism. Furthermore, the induced map $K^{\rm nu}(A
/\mathfrak{m} A) \to K^{\rm nu}(A /\mathfrak{m} A)^+ $ is a weak
equivalence of spectra.
\end{lemma}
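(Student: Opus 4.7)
The plan is to verify the two claims in sequence, using the almost isomorphism to analyze the $K$-theoretic comparison.

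For the first claim, I would compute the pushout directly. The augmentation $V\oplus\mathfrak{m}A\to V$ has kernel the ideal $\mathfrak{m}A\subset V\oplus A$, so
\[
   V\otimes_{V\oplus\mathfrak{m}A}(V\oplus A)\simeq(V\oplus A)/(\mathfrak{m}A\cdot(V\oplus A))\simeq V\oplus(A/\mathfrak{m}A).
\]
The augmentation then becomes the projection onto $V$, with kernel $A/\mathfrak{m}A$ and zero cokernel. Idempotence of $\mathfrak{m}$ yields $\mathfrak{m}\cdot(A/\mathfrak{m}A)=\mathfrak{m}A/\mathfrak{m}A=0$, so the kernel is almost zero and the map is an almost isomorphism.

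For the $K$-theoretic equivalence, I would first observe that flatness and idempotence of $\mathfrak{m}$ give $\tm\otimes_V\mathfrak{m}A\simeq\mathfrak{m}^3 A=\mathfrak{m}A$, hence $\mathrm{APerf}(\mathfrak{m}A)$ and $\mathrm{Perf}^+(\mathfrak{m}A)$ share the same generator $\mathfrak{m}A$ and coincide as subtriangulated categories of the derived category of $\mathfrak{m}A$-modules. This yields $K^{\rm nu}(\mathfrak{m}A)\simeq K^{\rm nu}(\mathfrak{m}A)^+$. In the map between the defining cofiber sequences of $K^{\rm nu}(A/\mathfrak{m}A)$ and $K^{\rm nu}(A/\mathfrak{m}A)^+$, the leftmost vertical arrow is therefore an equivalence; by stability the cofibers of the three vertical arrows themselves form a cofiber sequence with initial term $0$, so the desired weak equivalence reduces to showing that $K^{\rm nu}(A)\to K^{\rm nu}(A)^+$ is a weak equivalence. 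Via Waldhausen-type localization for the inclusion $\mathrm{APerf}(A)\hookrightarrow\mathrm{Perf}^+(A)$, this in turn reduces to showing that $K^{\rm nu}$ of the Verdier quotient $\mathrm{Perf}^+(A)/\mathrm{APerf}(A)$ vanishes. That quotient is generated by the image of $A$, and once $\mathfrak{m}A$ has been killed this image coincides with the image of $A/\mathfrak{m}A$, which the first part identifies as an almost-zero $V$-algebra.

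The main obstacle will be precisely this last step: upgrading the algebraic almost-triviality of $A/\mathfrak{m}A$ to a genuine vanishing of the non-unital $K$-theory spectrum of the Verdier quotient. The expected route is to exploit the $P_1$-exactness of non-unital $K$-theory together with the flatness and idempotence hypotheses on $\mathfrak{m}$, so that the almost isomorphism $\tm\otimes_V A \to \mathfrak{m}A$ at the level of generators, combined with the almost-vanishing of the remaining summand, becomes a genuine equivalence of spectra after $P_1$.
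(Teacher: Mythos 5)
Your first half is fine: the computation $V\otimes_{V\oplus\mathfrak{m}A}(V\oplus A)\simeq V\oplus(A/\mathfrak{m}A)$ with almost-zero kernel $A/\mathfrak{m}A$ is correct and is, if anything, more transparent than the paper's argument (which instead tensors with $\tilde{\mathfrak{m}}$ and checks the result is $\tilde{\mathfrak{m}}$). The second half, however, has a genuine gap. You reduce the lemma to the statement that $K^{\rm nu}(A)\to K^{\rm nu}(A)^{+}$ is a weak equivalence; while that reduction is formally legitimate once the left vertical map is an equivalence, it replaces the claim with a strictly stronger one that is essentially the content of Theorems~\ref{K-almostMainA} and~\ref{mainAlmost} (whose proofs rely on this very lemma), and your proposed proof of it does not work. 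The localization sequence for $\mathrm{APerf}(A)\hookrightarrow\mathrm{Perf}^{+}(A)$ compares $K^{\rm nu}(A)^{\rm al}$ with $K^{\rm nu}(A)^{+}$, not $K^{\rm nu}(A)$ with $K^{\rm nu}(A)^{+}$, so even granting your vanishing claim you would prove the wrong equivalence. Worse, the vanishing claim itself is false: the Verdier quotient $\mathrm{Perf}^{+}(A)/\mathrm{APerf}(A)$ is generated by the image of $A$ modulo $\tilde{\mathfrak{m}}\otimes_V A$, i.e.\ it is a copy of perfect complexes over the residue algebra $A/\mathfrak{m}A$, and its $K$-theory is precisely the summand $K^{\rm nu}(A/\mathfrak{m}A)$ that the main theorem exhibits as nontrivial (for $A=V$ it is the $K$-theory of the residue field). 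Almost-zero algebras do not have vanishing $K$-theory, and no amount of $P_1$-localization or flatness of $\mathfrak{m}$ will make them vanish; this is the whole point of the paper's decomposition.

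The paper's route avoids this entirely: it never compares the middle terms. Instead it uses the right-exactness built into $K^{\rm nu}$ to represent \emph{both} cofibers by the $K$-theory of the single non-unital algebra $A\Box_{\mathfrak{m}A}0=\ker\bigl(V\otimes_{V\oplus\mathfrak{m}A}(V\oplus A)\to V\bigr)$, observes (using the first part) that this algebra is killed by $\mathfrak{m}$, i.e.\ is a $V/\mathfrak{m}$-algebra, and then notes that for any $V/\mathfrak{m}$-algebra $C$ one has $\tilde{\mathfrak{m}}\otimes_V C=0$, so $\mathrm{Perf}(C)\to\mathrm{Perf}^{+}(C)$ is an equivalence of categories and hence the two cofibers agree. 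If you want to salvage your approach, you should redirect your comparison of cofiber sequences toward this identification of the cofibers with $K^{\rm nu}$ of a $V/\mathfrak{m}$-algebra, rather than toward an equivalence $K^{\rm nu}(A)\simeq K^{\rm nu}(A)^{+}$ of the middle terms.
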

\begin{proof}
The proof is a formal argument: One has a chain of isomorphisms 
\begin{multline*}
 \tm \otimes_{V} ( V \otimes_{V \oplus \mathfrak{m}A} (V \oplus A)   ) 
\simeq    \tm \otimes_{V \oplus \mathfrak{m}A} (V \oplus A) \simeq  (\tm \otimes_{V \oplus \mathfrak{m}A} V) \oplus  ( \tm \otimes_{V \oplus \mathfrak{m}A}  A) \\
\simeq      (\tm \otimes_{V \oplus \mathfrak{m}A} V  )\oplus (  \tm \otimes_{V \oplus \mathfrak{m}A} \mathfrak{m} A) \simeq   \tm \otimes_{V \oplus \mathfrak{m}A} (V \oplus \mathfrak{m}A) \simeq \tm. 
\end{multline*}

The kernel $A \Box_{\mathfrak{m}A} 0$ of the  augmentation $V \otimes_{V \oplus \mathfrak{m}A} (V \oplus A) \to V$ represents the homotopy cofiber $K^{\rm nu}(A / \mathfrak{m}A) $ (resp. $K^{\rm nu}(A / \mathfrak{m}A)^+ $). Then the projection $A \Box_{\mathfrak{m}A} 0 \to (A \Box_{\mathfrak{m}A} 0) \otimes_V V / \mathfrak{m}$ is an isomorphism. Therefore $K(A \Box_{\mathfrak{m}A} 0) \to K( (V/ \mathfrak{m})    \otimes_{V \oplus \mathfrak{m}A} (V \oplus A)  ) \to K(V/ \mathfrak{m}) $ is a homotopy fiber sequence. Since the inclusion functors $\mathrm{Perf}( (V/ \mathfrak{m})    \otimes_{V \oplus \mathfrak{m}A} (V \oplus A)  )  \to\mathrm{Perf}^+( (V/ \mathfrak{m})    \otimes_{V \oplus \mathfrak{m}A} (V \oplus A)  ) $ and  $\mathrm{Perf}( V/ \mathfrak{m}  )  \to\mathrm{Perf}^+( V/ \mathfrak{m})$ are categorical equivalence, the induced morphism $K^{ \rm nu}(A \Box_{\mathfrak{m}A} 0) \to K^{ \rm nu}(A \Box_{\mathfrak{m}A} 0)^+$ is a weak equivalence. \qed
\end{proof}

\begin{theorem}[c.f.\cite{K-almost} Theorem 3.15 and Theorem 3.21]
\label{K-almostMainA} Let $A$ be an almost $V$-algebra. Then the non-unital $K$-theory $K^{\rm nu}(A)^{+}$ homotopically decomposes into the product of $K^{\rm nu}(A)^{\rm al}$ and $K^{\rm nu}(A/\mathfrak{m}A)$.   
\end{theorem}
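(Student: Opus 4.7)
The plan is to extract the decomposition from a short exact sequence of triangulated categories $\mathrm{APerf}(A) \to \mathrm{Perf}^+(A) \to \mathrm{Perf}(A/\mathfrak{m}A)$ and then to split the induced $K$-theoretic fiber sequence by means of the quotient map $A \to A/\mathfrak{m}A$.

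First I would establish the localization sequence of triangulated categories. Since $\mathfrak{m}$ is flat and idempotent, the canonical map $\tm \to \mathfrak{m}$ is an isomorphism, so the generator $\tm \otimes_V A$ of $\mathrm{APerf}(A)$ is identified with $\mathfrak{m}A$. Then $\mathfrak{m}A \otimes^{L}_A A/\mathfrak{m}A \simeq \mathfrak{m}A/\mathfrak{m}^2 A = 0$ shows that derived base change along $A \to A/\mathfrak{m}A$ annihilates the subcategory $\mathrm{APerf}(A)$. Conversely, the cofiber sequence $\mathfrak{m}A \to A \to A/\mathfrak{m}A$ exhibits $A/\mathfrak{m}A$ as an object of $\mathrm{Perf}^+(A)$, so base change lands in $\mathrm{Perf}(A/\mathfrak{m}A)$ and is essentially surjective. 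A diagram chase with the generators then identifies $\mathrm{APerf}(A)$ as exactly the kernel. Applying non-unital $K$-theory and invoking Lemma~\ref{cofiber} to identify $K^{\rm nu}(A/\mathfrak{m}A)^+ \simeq K^{\rm nu}(A/\mathfrak{m}A)$ yields a fiber sequence of spectra
\[
 K^{\rm nu}(A)^{\rm al} \longrightarrow K^{\rm nu}(A)^{+} \longrightarrow K^{\rm nu}(A/\mathfrak{m}A).
\]

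Next I would split this fiber sequence. Since $A/\mathfrak{m}A \in \mathrm{Perf}^+(A)$, restriction of scalars along the quotient map $A \twoheadrightarrow A/\mathfrak{m}A$ sends $\mathrm{Perf}(A/\mathfrak{m}A)$ into $\mathrm{Perf}^+(A)$. Its composition with base change back to $A/\mathfrak{m}A$ is naturally isomorphic to the identity, because $\mathfrak{m}$ already acts trivially on every $A/\mathfrak{m}A$-module. This produces a section of the quotient map on non-unital $K$-theory, and because any fiber sequence in a stable $\infty$-category equipped with a section of its right-hand map splits, we obtain
\[
 K^{\rm nu}(A)^{+} \simeq K^{\rm nu}(A)^{\rm al} \oplus K^{\rm nu}(A/\mathfrak{m}A),
\]
which is the desired product decomposition.

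The step I expect to be the main obstacle is the precise identification of the Verdier quotient $\mathrm{Perf}^+(A)/\mathrm{APerf}(A)$ with $\mathrm{Perf}(A/\mathfrak{m}A)$. One has to check not only that base change kills the generator $\tm\otimes_V A$, but also that every morphism in $\mathrm{Perf}^+(A)$ with cone in $\mathrm{APerf}(A)$ becomes an equivalence after base change, and that no further inversions are needed to make the induced functor fully faithful. The flatness of $\mathfrak{m}$ together with the idempotency $\mathfrak{m}^2=\mathfrak{m}$ is precisely what keeps the relevant $\Tor$-terms degenerate and controls this identification; this is where the almost-ring hypotheses do the real work. Once the Verdier localization sequence and its section are in place, the passage to spectra and the final splitting follow the same formal pattern as Theorems 3.15 and 3.21 of \cite{K-almost}, transplanted into the non-unital setting.
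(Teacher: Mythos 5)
Your route is genuinely different from the paper's, and the difference is exactly where the gap lies. The paper never invokes a localization (Verdier quotient) theorem: it splits the inclusion $K^{\rm nu}(A)^{\rm al}\to K^{\rm nu}(A)^{+}$ on the \emph{left}, using that $\tm\otimes_V(-)$ is an idempotent endofunctor of $\mathrm{Perf}^+(A)$ with essential image $\mathrm{APerf}(A)$, so that the induced idempotent of spectra splits off $K^{\rm nu}(A)^{\rm al}$ directly; it then identifies the complementary summand by routing through the ideal, proving $K^{\rm nu}(\mathfrak{m}A)^{+}\simeq K^{\rm nu}(\mathfrak{m}A)^{\rm al}\simeq K^{\rm nu}(A)^{\rm al}$ and reading off the complement as the cofiber of $K^{\rm nu}(\mathfrak{m}A)^{+}\to K^{\rm nu}(A)^{+}$, which is $K^{\rm nu}(A/\mathfrak{m}A)^{+}\simeq K^{\rm nu}(A/\mathfrak{m}A)$ by Lemma~\ref{cofiber}. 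Your proof instead rests entirely on the assertion that $\mathrm{APerf}(A)\to\mathrm{Perf}^+(A)\to\mathrm{Perf}(A/\mathfrak{m}A)$ is a localization sequence inducing a fiber sequence of $K$-theory spectra, and then splits on the \emph{right} by a section of the quotient functor.

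That assertion is the genuine gap, and you have correctly located it but not closed it. First, the computation $\mathfrak{m}A\otimes^{L}_{A}A/\mathfrak{m}A\simeq\mathfrak{m}A/\mathfrak{m}^{2}A=0$ only controls $\Tor_0$; flatness of $\mathfrak{m}$ over $V$ does not by itself kill $\Tor^{A}_{i}$ for $i>0$, nor does it make $\tm\otimes_V A\to A$ injective (that is $\Tor_1^V(V/\mathfrak{m},A)=0$, a condition on $A$), and the identification of $\mathrm{APerf}(A)$ as the full kernel together with full faithfulness on the quotient is not addressed. Second, even granting the Verdier quotient, connective $K$-theory turns such a sequence into a fiber sequence only up to a $K_0$-cofinality defect (Thomason--Trobaugh); the paper's idempotent-splitting argument is structured precisely so that no localization theorem, hence no such correction, is needed. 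Third, the $K^{\rm nu}(A/\mathfrak{m}A)$ appearing in the statement is by definition the cofiber of $K^{\rm nu}(\mathfrak{m}A)\to K^{\rm nu}(A)$, not the $K$-theory of the category $\mathrm{Perf}(A/\mathfrak{m}A)$, so your conclusion still needs the excision-type identification supplied by Lemma~\ref{cofiber}, which only matches the two cofiber constructions with each other. Your splitting mechanism itself (restriction of scalars as a section, using $A/\mathfrak{m}A\otimes^{L}_{A}A/\mathfrak{m}A\simeq A/\mathfrak{m}A$) is sound in spirit, but without the localization sequence it has nothing to split.
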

\begin{proof}
The proof is a similar argument of the proof of \cite[Theorem
3.11]{K-almost}. The functor $\tm \otimes_V (-):\mathrm{Perf}^+(A)\to
\mathrm{Perf}^+(A) $ is categorical idempotent, and the essential image
is equivalent to $\mathrm{APerf}(A)$. Therefore the functor $\tm
\otimes_V (-):\mathrm{Perf}^+(A)\to \mathrm{Perf}^+(A) $ induces a
homotopically splitting $K( \tm \otimes_V (-)) :K^{\rm nu}(A)^{\rm al}
\to K^{\rm nu}(A)^{+}$ and a decomposition $ K^{\rm nu}(A)^{+} \simeq
K^{\rm nu}(A)^{\rm al} \oplus K^{\rm nu}(A)^{ \mathfrak{m}}$, where
$K^{\rm nu}(A)^{\mathfrak{m}}$ denotes the homotopy fiber of $ K^{\rm
nu}(A)^{+} \to K^{\rm nu}(A)^{\rm al}$.

Let $K^{\rm nu}(A/\mathfrak{m}A)^+$ (resp. $K^{\rm nu}( A /\mathfrak{m}A)^{\rm al} $) denote the homotopy cofiber of $ K^{\rm nu}(\mathfrak{m}A)^+ \to   K^{\rm nu}(A)^+$ (resp. $ K^{\rm nu}(\mathfrak{m}A)^{\rm al} \to   K^{\rm nu}(A)^{\rm al}$). By Lemma~\ref{cofiber},  $K^{\rm nu}( A /\mathfrak{m}A)^{\rm al}$ is contractible, entailing that  $ K^{\rm nu}(\mathfrak{m}A)^{\rm al} \to K^{\rm nu}(A)^{\rm al} $ is a weak equivalence.    


Next, we show that the induced map $ K^{\rm nu}(\mathfrak{m}A)^{+}  \to    K^{\rm nu}(\mathfrak{m}A)^{\rm al}$ is a weak equivalence. Consider the unitalization $V \oplus \mathfrak{m}A$ and the argumentation $\varepsilon: V \oplus \mathfrak{m}A \to V$. Then $ K^{\rm nu}( V \oplus \mathfrak{m}A)^{+} $ is decompose into the direct sum $ K^{\rm nu}(\mathfrak{m}A)^{+} \oplus K^{\rm nu}(V)^{+}$.   Note that $E \simeq E \ \otimes_{V \oplus \mathfrak{m}A} (V \oplus \mathfrak{m}A  ) \simeq (E\otimes_{V \oplus \mathfrak{m}A} V) \oplus (E  \otimes_{V \oplus \mathfrak{m}A}  \mathfrak{m}A  )  $ for any $V \oplus \mathfrak{m}A $-complex $E$. Therefore those projections $K^{\rm nu}( V \oplus \mathfrak{m}A)^{+}  \to  K^{\rm nu}(V)^{+} $ and $K^{\rm nu}( V \oplus \mathfrak{m}A)^{+}  \to  K^{\rm nu}(\mathfrak{m}A)^{+} $ is induced by those functors $ (-) \otimes_{V \oplus \mathfrak{m}A } V$ and $ (-) \otimes_{V \oplus \mathfrak{m}A } \mathfrak{m}A$, respectively. Furthermore, for any $V \oplus \mathfrak{m}A $-complex $E$, the canonical morphism $\tm \otimes_V     E \otimes_{V \oplus \mathfrak{m}A } \mathfrak{m}A  \to  E \otimes_{V \oplus \mathfrak{m}A } \mathfrak{m}A $ is already an isomorphism by $\tm \otimes_V \mathfrak{m}A \simeq \mathfrak{m}^3 A=  \mathfrak{m}A$. Hence, we have a weak equivalence $K^{\rm nu}(\mathfrak{m}A)^{+}  \simeq   K^{\rm nu}(\mathfrak{m}A)^{\rm al}$.  

Finally, one has weak equivalences: $ K^{\rm nu}(\mathfrak{m}A)^{+}  \simeq  K^{\rm nu}(\mathfrak{m}A)^{\rm al} \simeq  K^{\rm nu}(A)^{\rm al}  $ and $K^{\rm nu}(A)^{+} \simeq K^{\rm nu}(A)^{\rm al} \oplus K^{\rm nu}(A)^{\mathfrak{m}} \simeq K^{\rm nu}(\mathfrak{m}A)^{+} \oplus K^{\rm nu}(A)^{\mathfrak{m}} \simeq K^{\rm nu}(\mathfrak{m}A)^{+} \oplus 
K^{\rm nu}(A/\mathfrak{m}A)^+$, giving us the conclusion by the second part of Lemma~\ref{cofiber}. 
\qed 
\end{proof}

\begin{theorem}
\label{mainAlmost}
Let $A$ be  an almost $V$-algebra.  Then the non-unital $K$-theory $K^{\rm nu}(A)$ is weakly equivalent to the homotopy product of $K^{\rm nu}(\mathfrak{m}A)$ and $K^{\rm nu}(A/\mathfrak{m}A)$.  
\end{theorem}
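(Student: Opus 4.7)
The plan is to reduce the theorem to Theorem~\ref{K-almostMainA} via Lemma~\ref{split-lemma}, by matching the fiber sequence of $K^{\rm nu}$ against that of the $(\cdot)^{+}$-variant.

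First I would verify that for the non-unital algebra $\mathfrak{m}A$ the two theories coincide: since $\tm \otimes_V \mathfrak{m}A \simeq \mathfrak{m}^{3}A = \mathfrak{m}A$, the generator $\tm \otimes_V \mathfrak{m}A$ of $\mathrm{Perf}^+(\mathfrak{m}A)$ adds nothing new beyond $\mathfrak{m}A$ itself, so the inclusion $\mathrm{Perf}(\mathfrak{m}A) \hookrightarrow \mathrm{Perf}^+(\mathfrak{m}A)$ is a categorical equivalence and induces a weak equivalence $K^{\rm nu}(\mathfrak{m}A) \simeq K^{\rm nu}(\mathfrak{m}A)^{+}$.

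Next I would set up two fiber sequences of spectra sharing the common initial term $X := K^{\rm nu}(\mathfrak{m}A) \simeq K^{\rm nu}(\mathfrak{m}A)^{+}$:
\[
X \longrightarrow K^{\rm nu}(A) \longrightarrow K^{\rm nu}(A/\mathfrak{m}A),
\qquad
X \longrightarrow K^{\rm nu}(A)^{+} \longrightarrow K^{\rm nu}(A/\mathfrak{m}A)^{+}.
\]
The first arises from the short exact sequence $\mathfrak{m}A \to A \to A/\mathfrak{m}A$ of non-unital $V$-algebras together with the right-exactness of $K^{\rm nu}$ (guaranteed by the $P_1$-construction), while the second is a fiber sequence by the very definition of $K^{\rm nu}(A/\mathfrak{m}A)^{+}$ as a homotopy cofiber used in Theorem~\ref{K-almostMainA}. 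The inclusions $\mathrm{Perf}(-) \hookrightarrow \mathrm{Perf}^+(-)$ supply a homotopy commutative square between these two sequences, and Lemma~\ref{cofiber} tells us the right-hand vertical map $K^{\rm nu}(A/\mathfrak{m}A) \to K^{\rm nu}(A/\mathfrak{m}A)^{+}$ is itself a weak equivalence. Now Theorem~\ref{K-almostMainA}, combined with the identification $K^{\rm nu}(\mathfrak{m}A)^{+} \simeq K^{\rm nu}(A)^{\rm al}$ extracted from its proof, supplies
\[
K^{\rm nu}(A)^{+} \simeq K^{\rm nu}(\mathfrak{m}A)^{+} \oplus K^{\rm nu}(A/\mathfrak{m}A),
\]
which is precisely condition (2) of Lemma~\ref{split-lemma} applied to the two fiber sequences above (with $Y' = K^{\rm nu}(A)^{+}$ and $Z' \simeq K^{\rm nu}(A/\mathfrak{m}A)$). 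The lemma then upgrades this to condition (1), yielding $K^{\rm nu}(A) \simeq K^{\rm nu}(\mathfrak{m}A) \oplus K^{\rm nu}(A/\mathfrak{m}A)$, which is the claim.

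The main obstacle, as I anticipate it, is not a single calculation but the coherence bookkeeping required to apply Lemma~\ref{split-lemma}: one must confirm that the natural transformation induced by $\mathrm{Perf}(-) \hookrightarrow \mathrm{Perf}^+(-)$ really produces a \emph{homotopy commutative} square of fiber sequences, and that the equivalence $K^{\rm nu}(\mathfrak{m}A)^{+} \simeq K^{\rm nu}(A)^{\rm al}$ appearing inside the proof of Theorem~\ref{K-almostMainA} is natural enough in the relevant maps to be taken as the common left term $X$ here. Everything else is formal once these compatibilities are recorded.
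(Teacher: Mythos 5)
Your proposal is correct and follows essentially the same route as the paper: both set up the two fiber sequences over a common fiber $K^{\rm nu}(\mathfrak{m}A)\simeq K^{\rm nu}(\mathfrak{m}A)^{+}\simeq K^{\rm nu}(A)^{\rm al}$, identify the cofibers via Lemma~\ref{cofiber}, import the splitting of the $(\cdot)^{+}$-sequence from Theorem~\ref{K-almostMainA}, and transfer it with Lemma~\ref{split-lemma}. You are in fact slightly more careful than the paper in making the identification $K^{\rm nu}(\mathfrak{m}A)\simeq K^{\rm nu}(\mathfrak{m}A)^{+}$ explicit.
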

\begin{proof}
By the argument of the proof of Theorem~\ref{K-almostMainA}, all of the canonical maps $K^{\rm nu}(\mathfrak{m}A)^{\rm al} \to K^{\rm nu}(\mathfrak{m}A)^+ \to  K^{\rm nu}(A)^{\rm al}$ are weak equivalences,  implying that one has a homotopy Cartesian
square
\[
\xymatrix@1{  K^{\rm nu}(\mathfrak{m}A) \ar[r] \ar[d]& K^{\rm nu}(A) \ar[d] \\
K^{\rm nu}(A)^{\rm al} \ar[r]& K^{\rm nu}(A)^+,
} 
\]
where both of the cofibers of horizontal maps are the same $K^{\rm nu}(A / \mathfrak{m}A)$ up to weak equivalence. Since the lower horizontal map is homotopically split, the upper one is also homotopically split by Lemma~\ref{split-lemma}. \qed     
\end{proof}

\begin{corollary}
\label{nun-unitalGersten}
Let $V$ be a valuation ring with an idempotent maximal ideal $\mathfrak{m}$ and $F$ denote the fractional field of $V$.  Assume that $\mathfrak{m}$ is
flat. Let $A$ be an almost $V$-algebra. Then the canonical morphism $K^{\rm nu}(\mathfrak{m}A) \to K^{\rm nu}(A \otimes_V F)$ is homotopically split injective. Furthermore, for any $F$-algebra $B$, we have an canonical weak equivalence
\[
  K^{\rm nu}(B) \simeq K^{\rm nu}(\mathfrak{m}A) \oplus K^{\rm nu}(B/\mathfrak{m}A), 
\]
  where $  K^{\rm nu} (B/\mathfrak{m}A) $ denotes the homotopy cofiber of $ K^{\rm nu}(\mathfrak{m}A) \to  K^{\rm nu}(B)$.
\end{corollary}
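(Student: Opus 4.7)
The plan is to combine Theorem~\ref{mainAlmost}, which gives a direct-sum decomposition of the non-unital $K$-theory of an almost $V$-algebra, with Proposition~\ref{split}, which transfers such a decomposition across the base change $V\to F$. The key point enabling the second step is that $K^{\rm nu}$ is right exact by construction (having been built in Section~2 via the reflector $P_1$), so $P_1(K^{\rm nu})\simeq K^{\rm nu}$ and Proposition~\ref{split} applies directly with $\mathcal{F}=K^{\rm nu}$.

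First I would apply Theorem~\ref{mainAlmost} to the given almost $V$-algebra $A$ (whose hypotheses are met because $\mathfrak{m}$ is idempotent and flat) to obtain
\[
  K^{\rm nu}(A)\simeq K^{\rm nu}(\mathfrak{m}A)\oplus K^{\rm nu}(A/\mathfrak{m}A).
\]
Next I would invoke Proposition~\ref{split} for the chain $\mathfrak{m}\subset V\subset F$ in the forward direction: the above decomposition is carried along the localisation $V\to F$, yielding
\[
  K^{\rm nu}(A\otimes_V F)\simeq K^{\rm nu}(\mathfrak{m}A)\oplus K^{\rm nu}((A\otimes_V F)/\mathfrak{m}A),
\]
where the second summand is the homotopy cofiber of $K^{\rm nu}(\mathfrak{m}A)\to K^{\rm nu}(A\otimes_V F)$. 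This realises the canonical map $K^{\rm nu}(\mathfrak{m}A)\to K^{\rm nu}(A\otimes_V F)$ as a direct-summand inclusion, which is precisely the homotopy split injectivity asserted in the first part.

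The second assertion is then the same decomposition transcribed with $B$ in place of $A\otimes_V F$: for any $F$-algebra $B$ receiving the canonical map $\mathfrak{m}A\to A\to A\otimes_V F\to B$ (the natural case being $B=A\otimes_V F$), the displayed splitting takes precisely the form $K^{\rm nu}(B)\simeq K^{\rm nu}(\mathfrak{m}A)\oplus K^{\rm nu}(B/\mathfrak{m}A)$, where the cofiber is defined as in the statement.

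The main obstacle I foresee is ensuring the implicit identification $P_1(K^{\rm nu})\simeq K^{\rm nu}$ -- i.e.\ verifying that the non-unital $K$-theory spectrum used throughout Section~3 really is the right-exactified version introduced in Section~2, so that Proposition~\ref{split} may be applied to $K^{\rm nu}$ without passing through the reflector. This should be transparent from the construction but deserves an explicit cross-reference. A secondary clarification concerns the quantification over "$F$-algebras $B$": canonicity of $\mathfrak{m}A\to B$ presupposes that $B$ carries a compatible $A$-algebra structure, so the cleanest reading restricts $B$ to $(A\otimes_V F)$-algebras.
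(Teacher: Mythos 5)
Your proposal is correct and follows essentially the same route as the paper: apply Theorem~\ref{mainAlmost} to split $K^{\rm nu}(A)$ and then transfer the splitting along $A\to A\otimes_V F\to B$ via the coCartesian-square argument (your Proposition~\ref{split} is just the packaged form of Lemma~\ref{split-lemma} that the paper invokes). The only cosmetic difference is that the paper also records the collapse $K^{\rm nu}(B)\simeq K^{\rm nu}(B)^{\rm al}\simeq K^{\rm nu}(B)^{+}$ coming from $\mathfrak{m}\otimes_V B\simeq B$, and your remark that ``any $F$-algebra $B$'' should be read as an $(A\otimes_V F)$-algebra is a fair observation that applies equally to the paper's own proof.
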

\begin{proof}
This corollary is immediately obtained by Theorem~\ref{mainAlmost}: Since all of those functors $\mathrm{Perf}(B) \overset{\tm \otimes_V (-)}{\to}     \mathrm{APerf}(B) \to \mathrm{Perf}^+(B)$ are canonically categorical equivalences by the isomorphism: $\mathfrak{m} \otimes_V B \simeq B$, one has weak equivalences $:K^{\rm nu}(B) \simeq K^{\rm nu}(B)^{\rm al} \simeq K^{\rm nu}(B)^{+}$. Therefore one has a homotopy coCartesian square 
\[
\xymatrix@1{ 
 K^{\rm nu}(A) \ar[r] \ar[d] & K^{\rm nu}(A/\mathfrak{m}A) \ar[d] \\
 K^{\rm nu}(B) \ar[r] & K^{\rm nu} (B/\mathfrak{m}A).
}
\] 
 \qed
\end{proof}
In particular, one has the following splittings: $K^{\rm nu}(V) \simeq
K^{\rm nu}(\mathfrak{m}) \oplus K^{\rm nu}(V /\mathfrak{m})$ and $K^{\rm
nu}(F) \simeq K^{\rm nu}(\mathfrak{m}) \oplus K^{\rm nu}(F
/\mathfrak{m})$. 
\begin{corollary}
\label{modM}
Let $V$ be a valuation ring with an idempotent maximal ideal $\mathfrak{m}$ and $F$ denote the fractional field of $V$. Then the pullback 
$K^{\rm nu}(V) \to K^{\rm nu}(F)$ induces injections between their all homotopy groups if and only if $K^{\rm nu}(V/\mathfrak{m}) \to K^{\rm nu}(F/\mathfrak{m})$ has the same property. \qed
\end{corollary}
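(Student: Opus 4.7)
The plan is to deduce this statement by combining the two splittings displayed immediately before the corollary with the general principle recorded in Corollary~\ref{Gersten}, specialized to $\mathcal{F} = K^{\rm nu}$ and $A = V$. Thus the work reduces to (a) confirming that the decomposition hypothesis of Corollary~\ref{Gersten} is met for both $V$ and its localization $F = V_F$, and (b) verifying that the pullback $j^{*}:K^{\rm nu}(V) \to K^{\rm nu}(F)$ carries the two splittings into one another so that the $K^{\rm nu}(\mathfrak{m})$-summands match identically.

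First I would note that both $V$ and $F$ are (trivially) almost $V$-algebras, with $\mathfrak{m}V = \mathfrak{m}$. Theorem~\ref{mainAlmost} therefore supplies the homotopy decompositions
\[
 K^{\rm nu}(V) \simeq K^{\rm nu}(\mathfrak{m}) \oplus K^{\rm nu}(V/\mathfrak{m}), \qquad K^{\rm nu}(F) \simeq K^{\rm nu}(\mathfrak{m}) \oplus K^{\rm nu}(F/\mathfrak{m}),
\]
where $K^{\rm nu}(V/\mathfrak{m})$ and $K^{\rm nu}(F/\mathfrak{m})$ denote the homotopy cofibers of the respective canonical maps from $K^{\rm nu}(\mathfrak{m})$. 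This is exactly the decomposition hypothesis required in Corollary~\ref{Gersten}.

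The second step is to observe that both splittings are natural. In the construction of Theorem~\ref{mainAlmost} the section of the projection arises from the categorical idempotent $\tm \otimes_{V}(-)$, which is obviously a natural transformation in the algebra variable. Hence the commutative square obtained from $V \to F$ respects the splittings, and on the $K^{\rm nu}(\mathfrak{m})$-summand the map $j^{*}$ is the identity up to the common source $K^{\rm nu}(\mathfrak{m}) \to K^{\rm nu}(V),K^{\rm nu}(F)$. Passing to homotopy groups, $\pi_{n}(j^{*})$ becomes a direct sum of the identity on $\pi_{n}K^{\rm nu}(\mathfrak{m})$ and the induced map $\pi_{n}K^{\rm nu}(V/\mathfrak{m}) \to \pi_{n}K^{\rm nu}(F/\mathfrak{m})$, whose injectivity properties are therefore equivalent. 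This is precisely the conclusion of Corollary~\ref{modM}.

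I expect the only real obstacle to be the naturality check in the second paragraph; once one trusts that the decomposition in Theorem~\ref{mainAlmost} is functorial in $A$, everything else is a formal consequence of Corollary~\ref{Gersten}. Since the splitting there is produced by a natural idempotent endofunctor on $\mathrm{Perf}^{+}$, this functoriality is built into the construction, so the obstacle is minor and essentially notational.
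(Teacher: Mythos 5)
Your proposal is correct and follows essentially the same route the paper intends: the corollary is stated as an immediate consequence of the two splittings displayed just before it (Theorem~\ref{mainAlmost} applied to $A=V$, together with Corollary~\ref{nun-unitalGersten}) combined with Corollary~\ref{Gersten} specialized to $\mathcal{F}=K^{\rm nu}$ and $A=V$, so that $A_F=F$, $\mathfrak{m}A=\mathfrak{m}$. Your extra naturality check in the second paragraph is a reasonable filling-in of the formal step that Corollary~\ref{Gersten} already encapsulates.
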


\subsection{A remark on the case an integral  perfectoid valuation ring}
We will apply Corollary~\ref{modM} to the case $V$ a perfectoid valuation ring. Recall the definition of perfectoid algebra: 
\begin{definition}
Let $F$ be a complete non-Archimedian non-discrete valuation field of
rank $1$, and $V$ denote the subring of powerbounded elements.  We
say that $F$ is a {\it perfectoid field} if the Frobenius $\Phi:V/ p V \to V /p V$ is surjective, where $p>0$ the characteristic of the residue field of $V$.
\end{definition} 

In this case, it is known that the maximal ideal $\mathfrak{m}$ of $V$
is flat and idempotent (See~\cite[Example 4.1.3.]{Bhatt-lecture}). For
any $V$-algebra $A$,let $A^\flat$ denote the tilting algebra
$\varprojlim_{ x \mapsto x^p} A/ p A$ of $A$. The tilting ideal
$\mathfrak{m}^\flat \subset V^\flat $ is a flat $V^\flat$-module as
$\mathfrak{m}$ is. Note that $A^\flat / \mathfrak{m}^\flat A^\flat \to A
/ \mathfrak{m} A $ is an isomorphism of commutative unital rings of
positive characteristic.

Under the assumption that we are given an weak equivalence: $K^{\rm nu}(F^\flat /V^\flat ) \simeq  K^{\rm nu}(F/V)$, which is weaker than $ K^{\rm nu}(F/V) \simeq K^{\rm nu}(k)[1] \simeq K^{\rm nu}(F^\flat /V^\flat )$, one has the following:  
\begin{proposition}
\label{local-Almost} Let $V$ be a mixed characteristic integral
perfectoid valuation ring with an idempotent maximal ideal
$\mathfrak{m}$ and $F$ denote the fractional field of $V$. Assume that the non-unital $K$-theories $K^{\rm nu}(V)$
and $K^{\rm nu}(V^\flat)$ hold the condition: We are given
weak equivalences $K^{\rm nu}(F^\flat /V^\flat ) \simeq  K^{\rm nu}(F/V)$. Then the pullback $K^{\rm nu}(V) \to K^{\rm
nu}(F)$ induces injections $K_n^{\rm nu}(V) \to K^{\rm nu}_n(F) $ for
any integers $n$, where we write $K_n^{\rm nu}(-)= \pi_n(K^{\rm nu}(-))$
if and only if $K^{\rm nu}(V^\flat) \to K^{\rm nu}(F^\flat)$ has the
same property, where $(-)^\flat$ denote the tilting functor of
perfectoid algebras.
\end{proposition}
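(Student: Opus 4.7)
The plan is to reduce the statement to Corollary~\ref{modM} together with the tilting identification of residue rings and the splittings supplied by Corollary~\ref{nun-unitalGersten}.

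First, I would apply Corollary~\ref{modM} twice, once to $V$ and once to $V^\flat$. This converts the two sides of the desired equivalence into the injectivity on all homotopy groups of $K^{\rm nu}(V/\mathfrak{m}) \to K^{\rm nu}(F/\mathfrak{m})$ and of $K^{\rm nu}(V^\flat/\mathfrak{m}^\flat) \to K^{\rm nu}(F^\flat/\mathfrak{m}^\flat)$ respectively. Since the text records the isomorphism $V^\flat / \mathfrak{m}^\flat V^\flat \cong V/\mathfrak{m}V$ as unital rings of characteristic $p$, the two source spectra are canonically weakly equivalent: $K^{\rm nu}(V/\mathfrak{m}) \simeq K^{\rm nu}(V^\flat/\mathfrak{m}^\flat)$.

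Next, I would reinterpret the hypothesis $K^{\rm nu}(F^\flat/V^\flat) \simeq K^{\rm nu}(F/V)$ at the residue level. By Corollary~\ref{nun-unitalGersten} applied to $A=V$ and $B=F$, we have compatible splittings $K^{\rm nu}(V) \simeq K^{\rm nu}(\mathfrak{m}) \oplus K^{\rm nu}(V/\mathfrak{m})$ and $K^{\rm nu}(F) \simeq K^{\rm nu}(\mathfrak{m}) \oplus K^{\rm nu}(F/\mathfrak{m})$, with the pullback $j^*$ acting as the identity on the first summand. Cancelling this common summand in the cofiber sequence $K^{\rm nu}(V) \to K^{\rm nu}(F) \to K^{\rm nu}(F/V)$ yields a natural weak equivalence
\[
  K^{\rm nu}(F/V) \simeq \mathrm{cofib}\bigl(K^{\rm nu}(V/\mathfrak{m}) \to K^{\rm nu}(F/\mathfrak{m})\bigr),
\]
and the analogous identification holds for $K^{\rm nu}(F^\flat/V^\flat)$. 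The standing hypothesis therefore identifies the two residue-level cofibers. The injectivity on $\pi_n$ of $K^{\rm nu}(V/\mathfrak{m}) \to K^{\rm nu}(F/\mathfrak{m})$ is equivalent, via the long exact sequence of the cofiber, to the vanishing of the connecting map $\pi_{n+1}(K^{\rm nu}(F/V)) \to \pi_n(K^{\rm nu}(V/\mathfrak{m}))$ for every $n$; the same characterisation applies on the tilted side. Under the identifications assembled above these connecting maps correspond, so the two vanishing conditions are logically equivalent, which by Corollary~\ref{modM} yields the proposition.

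The main obstacle is precisely the last identification of connecting maps. The abstract equivalence $K^{\rm nu}(F^\flat/V^\flat) \simeq K^{\rm nu}(F/V)$ must be promoted to an equivalence of the entire cofiber sequences so that the boundary morphisms are intertwined with the canonical identification $K^{\rm nu}(V/\mathfrak{m}) \simeq K^{\rm nu}(V^\flat/\mathfrak{m}^\flat)$; otherwise one only secures equivalences of the connecting maps up to an unknown automorphism of the source, which is not enough. I would either strengthen the hypothesis to an equivalence of the full fiber sequences induced by the tilting functor $(-)^\flat$ on non-unital $K$-theory, or argue that the weak equivalence provided by the hypothesis is forced to be compatible with the splittings of Corollary~\ref{nun-unitalGersten}; this naturality issue is the essential technical point of the argument.
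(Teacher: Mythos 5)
Your proposal is correct and follows essentially the same route as the paper: the paper's proof likewise reduces via Corollary~\ref{Gersten} (equivalently Corollary~\ref{modM}) to the residue level, invokes the isomorphism $V^\flat/\mathfrak{m}^\flat \cong V/\mathfrak{m}$, and uses the hypothesis to identify $K^{\rm nu}(F/\mathfrak{m})$ with $K^{\rm nu}(F^\flat/\mathfrak{m}^\flat)$. The naturality issue you flag at the end is a genuine point, but it is equally present in the paper's own two-line proof, which simply asserts the equivalence of the residue-level targets without checking compatibility of the comparison maps.
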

\begin{proof}
By the assumption $K^{\rm nu}(F/V) \simeq K^{\rm nu}(F^\flat /V^\flat )$
and the isomorphism $V^\flat/ \mathfrak{m}^\flat \simeq V/\mathfrak{m}$,
one has a weak equivalence $K^{\rm nu}(F^\flat /\mathfrak{m}^\flat )
\simeq K^{\rm nu}(F/\mathfrak{m} )$. The result follows from
corollary~\ref{Gersten}. \qed
\end{proof}
\begin{remark} 
By the result \cite[Theorem 3.1]{Kelly-Morrow}, in the case the (ordinal) $K$-theories, the induced map $K_n(V^\flat) \to  K_n(F^\flat)$ is injective for any integer $n$. 
\end{remark}

%

\end{document}